\newtheorem{Theorem}{Theorem}[section]
\newtheorem{Lemma}[Theorem]{Lemma}
\newtheorem{Proposition}[Theorem]{Proposition}
\newtheorem{Question}[Theorem]{Question}
\theoremstyle{definition}
\newtheorem{Remark}[Theorem]{Remark}
\newtheorem{Definition}[Theorem]{Definition}
\newtheorem{Notation}[Theorem]{Notation}
\newtheorem{acknowledgement}{Acknowledgement}
\def\dim{\operatorname{dim}}
\def\hdeg{\operatorname{hdeg}}
\def\Ext{\operatorname{Ext}}
\def\lim{\operatorname{lim}}
\def\mm{{\frak m}}
\begin{document}
	
	\title[Generalized Cohen-Macaulay rings]{Small perturbations in generalized Cohen-Macaulay local rings}
	
	\author[Pham Hung Quy]{Pham Hung Quy}
	\address{Department of Mathematics, FPT University, Hanoi, Vietnam}
	\email{quyph@fe.edu.vn}
	
	\author{Van Duc Trung}
	\address{Department of Mathematics, University of Genoa, Via Dodecaneso 35, 16146 Genoa, Italy}
	\email{vanductrung@dima.unige.it}
	
	\thanks{2020 {\em Mathematics Subject Classification\/}: 13H10, 13D40, 13D45.\\
		The work is partially supported by a fund of Vietnam National Foundation for Science
		and Technology Development (NAFOSTED) under grant number 101.04-2020.10} 
	\keywords{Hilbert function, Small perturbation, Generalized Cohen-Macaulay ring, Local cohomology}
	
	\begin{abstract} Let $(R, \frak m)$ be a generalized Cohen-Macaulay local ring of dimension $d$, and $f_1, \ldots, f_r$ a part of system of parameters of $R$. In this paper we give explicit numbers $N$ such that the lengths of all lower local cohomology modules and the Hilbert function of $R/(f_1, \ldots, f_r)$ are preserved when we perturb the sequence $f_1, \ldots, f_r$ by $\varepsilon_1, \ldots, \varepsilon_r \in \frak m^N$. The second assertion extends a previous result of Srinivas and Trivedi for generalized Cohen-Macaulay rings.
	\end{abstract}
	
	\maketitle
	
	\section{Introduction}
	This work is inspired by the recent work of the first author with Ma and Smirnov \cite{MQS} about the preservation of Hilbert function under sufficiently small perturbations which was also inspired by the previous work of Srinivas and Trivedi \cite{ST1}. Taking a small perturbation arises naturally in studying deformations when we change the defining equations by adding terms of high order. In this way we can transform a singularity defined analytically, e.g., as a quotient of a (convergent) power series ring, into an algebraic singularity by truncating the defining equations.
	
	This problem was first considered  by Samuel in 1956. Let $f \in S = k[[x_1, \ldots, x_d]]$ be a hypersurface with an isolated singularity, i.e. the Jacobian ideal $J(f) = (\frac{\partial f}{\partial x_1}, \ldots, \frac{\partial f}{\partial x_d})$ is $(x_1,\ldots, x_d)$-primary. Then Samuel proved that for every $\varepsilon \in (x_1,\ldots, x_d) J(f)^2$ we have an automorphism of $S$ that maps $f \mapsto f + \varepsilon$. In particular, Samuel’s result asserts if $f$ has an isolated singularity and $\varepsilon$ is in a sufficiently large power of $(x_1, \ldots, x_d)$, then the rings $S/(f)$ and $S/(f + \varepsilon)$ are isomorphic. Samuel's result was extended by Hironaka in 1965, who showed that if $S/I$ is an equidimensional reduced isolated singularity, then $S/I \cong S/I'$ for every ideal $I'$ obtained by changing the generators of $I$ by elements of sufficiently large order such that $S/I'$ is still reduced, equidimensional, and same height as $I$.

	The isolated singularity is essential in the both theorems of Samuel and Hironaka. For a local ring $(R, \frak m)$ and a sequence of elements $f_1, \ldots, f_r$, instead of requiring the deformation to give isomorphic rings $R/(f_1, \ldots, f_r) \cong R/(f_1 + \varepsilon_1, \ldots, f_r + \varepsilon_r)$, we consider a weaker question: what properties and invariants are preserved by a sufficiently fine perturbation? For example, Eisenbud \cite{E} showed how to control the homology of a complex under a perturbation and thus showed that Euler characteristic and depth can be preserved. As an application, if $f_1, \ldots, f_r$ is a regular sequence, then so is the sequence $f_1 + \varepsilon_1, \ldots, f_r + \varepsilon_r$ as long as we take a sufficiently small perturbation. Huneke and Trivedi \cite{HT} extended this result for filter regular sequences, a generalization of the notion of regular sequence.
	
	For numerical invariants, perhaps the most natural direction is to study the behavior of Hilbert function. Srinivas and Trivedi \cite{ST1} showed that the Hilbert function of a sufficiently fine perturbation is at most the original Hilbert function. Furthermore they proved that the Hilbert functions of $R/(f_1, \ldots, f_r)$ and $R/(f_1 + \varepsilon_1, \ldots, f_r + \varepsilon_r)$ coincide under small perturbations provided two conditions: (a) $f_1, \ldots, f_r$ a filter regular sequence; (b) $R/(f_1, \ldots, f_r)$ is generalized Cohen-Macaulay. Recalling that $(R, \frak m)$ is generalized Cohen-Macaulay if all lower local cohomology $H^i_{\frak m}(R), i < \dim R$, have finite length. Moreover, a generalized Cohen-Macaulay ring is Cohen-Macaulay on the punctured spectrum. Srinivas and Trivedi gave examples to show that the condition (a) is essential even if $f_1, \ldots, f_r$ is a part of system of parameters. However they asked whether the condition (b) is superfluous.
	
	\begin{Notation} Let $(R,\mm)$ be a Noetherian local ring and $I = (f_1,\ldots,f_r)$ an ideal of $R$. For each $N > 0$ we denote
		$$C_N(I) = \{ (f_1+\varepsilon_1, \ldots, f_r+\varepsilon_r)\ | \ \varepsilon_i \in \mm^N \}.$$
	\end{Notation}
	
	Recently, Ma, Smirnov and the first author \cite{MQS} answered affirmatively the above question of Srinivas and Trivedi and proved the following.
	\begin{Theorem}
		Let $(R, \frak m)$ be a Noetherian local ring of dimension $d$, and $I \subseteq R$ is generated by  a filter regular sequence $f_1,\ldots,f_r$. Then there exists $N>0$ such that for all $I_N \in C_N(I)$, the Hilbert functions of
		$R/I$ and $R/I_N$ are equal, i.e.
		$$\ell(R/(I + \mm^n)) = \ell(R/(I_N + \frak m^n))$$
		for all $n \ge 1$.\footnote{Actually, we proved the result for any ideal $J$ such that $(f_1, \ldots, f_r)+J$ is $\frak m$-primary. Although the main result of this paper can be extended for such ideals, we will keep our interest for the maximal ideal for simplicity.}
	\end{Theorem}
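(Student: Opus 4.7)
The plan is induction on $r$, combining the Srinivas-Trivedi one-sided semicontinuity bound with a careful colon-ideal comparison enabled by the filter-regular hypothesis. First, I would note that the easy direction $\ell(R/(I_N+\mathfrak{m}^n))\le \ell(R/(I+\mathfrak{m}^n))$ follows, for each fixed $n$, from a standard Artin-Rees argument once $N$ is sufficiently large compared to $n$; this is essentially what Srinivas-Trivedi already proved. The real content of the theorem is therefore the reverse inequality, together with the crucial uniformity of a single $N$ that works for \emph{all} $n$ simultaneously.

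For the inductive step, with $r=0$ trivial, assume the statement for $r-1$ and set $J=(f_1,\ldots,f_{r-1})$ with $J_N$ its perturbation. By induction, for $\varepsilon_1,\ldots,\varepsilon_{r-1}\in\mathfrak{m}^{N_0}$ with $N_0$ large, $R/J$ and $R/J_N$ have equal Hilbert functions, and in fact the two ideals agree modulo any fixed power of $\mathfrak{m}$. From the four-term exact sequence
\begin{equation*}
0\to (J:_R f_r)/J\to R/J\xrightarrow{\,f_r\,}R/J\to R/(J,f_r)\to 0,
\end{equation*}
the Hilbert function of $R/(I+\mathfrak{m}^n)$ is read off from that of $R/(J+\mathfrak{m}^n)$ together with the colon module $K=(J:_R f_r)/J$, which has finite length by filter regularity and is therefore annihilated by some $\mathfrak{m}^c$. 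The analogous sequence holds after perturbation, with colon $K_N=(J_N:_R(f_r+\varepsilon_r))/J_N$.

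The heart of the proof is thus to show $K\cong K_N$ once $N$ is large enough. If $x(f_r+\varepsilon_r)\in J_N$ with $\varepsilon_r\in\mathfrak{m}^N$, then $xf_r\in J+\mathfrak{m}^N$, and filter regularity of $f_r$ on $R/J$ combined with Artin-Rees forces $x\in (J:_R f_r)+\mathfrak{m}^{N-c'}$ for a constant $c'$ independent of $x$ and $N$; the reverse containment is immediate. Propagating this isomorphism through the two short exact sequences above, and comparing term by term, yields the desired equality of Hilbert functions in every degree. The main obstacle is purely bookkeeping: one must assemble the Artin-Rees constants, the annihilator exponent $c$, and the inductive threshold $N_0$ into a single $N$ that does not depend on $n$. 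This is precisely where filter regularity pays off, since the finite length of $K$ (and of the analogous colon modules at every stage of the induction) provides uniform bounds on all the constants in play.
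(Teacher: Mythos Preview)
The paper does not prove this theorem: it is quoted from the earlier work \cite{MQS} of Ma, Quy and Smirnov and serves only as motivation for the explicit bounds obtained later in Sections~3 and~4. There is therefore no in-paper argument to compare against; what follows is an assessment of your sketch on its own terms.

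Your outline is in the spirit of \cite{MQS} (induction on $r$, Artin--Rees, finite-length colon modules coming from filter regularity), but the step you label ``reading off'' hides a genuine difficulty rather than mere bookkeeping. From the four-term sequence one gets, writing $\overline R=R/J$,
\[
\ell\bigl(R/(I+\mathfrak m^n)\bigr)\;=\;\ell\bigl((\mathfrak m^n\overline R:_{\overline R}\overline{f_r})\big/\mathfrak m^n\overline R\bigr),
\]
and the analogous identity over $\overline R'=R/J_N$ for the perturbation. The right-hand side depends on the interaction of $\overline{f_r}$ with the entire $\mathfrak m$-adic filtration of $\overline R$, not merely on the single finite-length module $K=(J:f_r)/J$. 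Even granting an isomorphism $K\cong K_N$ (for which you only argue one containment, and only up to an $\mathfrak m^{N-c'}$ error term), that does not by itself equate the colon lengths above for all $n>N$.

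There is a second, structural issue: the inductive hypothesis furnishes only equality of Hilbert functions of $\overline R$ and $\overline R'$, not an identification of the rings themselves. The two colon modules you must compare therefore live over genuinely different ambient rings, and ``propagating the isomorphism $K\cong K_N$ through the two short exact sequences'' is not a well-posed operation. The argument in \cite{MQS} confronts exactly this point by controlling the ideals $(f_1,\ldots,f_i)+\mathfrak m^n$ and their perturbed counterparts simultaneously through the induction, rather than by reducing to the single invariant $K$; that extra uniformity in $n$ is the real content of the theorem and is not supplied by your sketch.
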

	We also asked the question.
	\begin{Question}\label{question}
		Can one obtain explicit bounds on $N$?
	\end{Question}
	A certainly positive answer for the case $r=1$ was given in \cite[Theorem 3.3]{MQS}. If $R$ is a Cohen-Macaulay local ring of dimension $d$, Srinivas and Trivedi \cite[Proposition 1.1]{ST2} provided a formula for $N$ in terms of the multiplicity for any $r \ge 1$. Namely, we can choose
	$$N = (d-r)! \, e(R/I) + 2.$$
	Inspired by the above formula, one can hope to give a bound for $N$ in any local ring by using the extended degree instead of the multiplicity. See the next section for more details about the notion of extended degree. The aim on the present paper is to give an evident for this belief. We will extend the above result of Srinivas and Trivedi for the class of generalized Cohen-Macaulay rings by using the multiplicity and the length of local cohomology $H^i_{\frak m}(R)$.
	
	Let $(R, \frak m)$ be a local ring and $M$ a generalized Cohen-Macaulay module of dimension $d$. The Buchsbaum invariant of $M$ is defined as follows
	$$I(M) = \sum_{i = 0}^{d-1} \binom{d-1}{i} \ell(H^i_{\frak m}(M)).$$
	
	We now present the first main result of this paper.
	\begin{Theorem}\label{result 1}
		Let $(R, \mm)$ be a generalized Cohen-Macaulay local ring of dimension $d$ and $I \subseteq R$ is generated by a part of system of parameters $f_1,\ldots,f_r$ of $R$. Let $s = d-r $, and
		$$N = s!\, \big(e(R/I) + I(R/I)\big) + (s+1)I(R) + 1.$$
		Then for all $I_N \in C_N(I)$ we have
		the Hilbert functions of
		$R/I$ and $R/I_N$ are equal.
	\end{Theorem}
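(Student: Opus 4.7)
My strategy is to perturb the $r$ generators one at a time, reducing to iterated applications of the explicit single-element preservation result \cite[Theorem 3.3]{MQS}. Define the intermediate ideals
$$I_k = (f_1+\varepsilon_1, \ldots, f_k+\varepsilon_k, f_{k+1}, \ldots, f_r), \qquad 0 \le k \le r,$$
so that $I_0 = I$ and $I_r = I_N$. To conclude that the Hilbert functions of $R/I$ and $R/I_N$ agree, it is enough to prove the Hilbert function is unchanged at each step $R/I_{k-1} \to R/I_k$. Setting $K_k = (f_1+\varepsilon_1, \ldots, f_{k-1}+\varepsilon_{k-1}, f_{k+1}, \ldots, f_r)$, the $k$-th step amounts to a single-generator perturbation $f_k \mapsto f_k + \varepsilon_k$ inside $R_k := R/K_k$.

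First I would verify that $R_k$ is generalized Cohen-Macaulay and that the image of $f_k$ is filter regular in $R_k$. Since $R$ is generalized Cohen-Macaulay, any part of a system of parameters is filter regular, and this property is preserved when each entry is perturbed by an element of a sufficiently large power of $\mm$, so the generators of $K_k$ form a filter regular sequence in $R$. Then the one-element preservation theorem \cite[Theorem 3.3]{MQS} supplies an explicit threshold $N_k$, depending only on $e(R_k/(f_k)) = e(R/I)$ and on the local cohomology lengths of $R_k$, beyond which perturbation of $f_k$ preserves the Hilbert function of $R_k/(f_k)=R/I_{k-1}$.

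The central task is then to check that the uniform quantity $N = s!(e(R/I) + I(R/I)) + (s+1)I(R) + 1$ dominates every such $N_k$. This requires a uniform bound on $I(R_k)$ in terms of $I(R)$: iterating the long exact sequence in local cohomology induced by multiplication by a filter regular element, with each colon module of length controlled by $I(R)$, shows that $I(R_k)$ is bounded by a linear function of $I(R)$ with an explicit coefficient, which is what produces the $(s+1)I(R)$ summand. The remaining piece $s!(e(R/I) + I(R/I))$ should arise from a Srinivas-Trivedi-type regularity estimate on $R/I$: in a generalized Cohen-Macaulay ring of dimension $s$, the Buchsbaum inequality $\ell(M/\mathbf{x}M) \le e(\mathbf{x};M) + I(M)$ together with standard regularity bounds on the associated graded ring yields an upper bound of the stated form on the postulation number of $R/I$.

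The main obstacle is the uniformity across the entire family $C_N(I)$: each intermediate ring $R_k$ depends on $\varepsilon_1, \ldots, \varepsilon_{k-1}$, so the invariants entering the single-step threshold $N_k$ themselves vary with the perturbation. Ensuring that all these thresholds are simultaneously controlled by the fixed quantities $e(R/I)$, $I(R/I)$, $I(R)$, and that the accumulated error over all $r$ single-element steps does not exceed the stated $N$, is the delicate point — the formula for $N$ is in effect engineered so that both the regularity contribution from $R/I$ and the error propagation through the $r$ reductions fit inside a single uniform bound.
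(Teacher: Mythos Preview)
Your strategy is genuinely different from the paper's and, as written, has a real gap at exactly the point you flag as ``the main obstacle.'' When you write $e(R_k/(f_k)) = e(R/I)$ you are asserting $e(R/I_{k-1}) = e(R/I)$ for the intermediate ideal $I_{k-1}$, which already involves the perturbations $\varepsilon_1,\ldots,\varepsilon_{k-1}$; this is not given, it is part of what has to be proved. The same circularity affects the local cohomology lengths of $R/I_{k-1}$ (hence $I(R/I_{k-1})$), which enter the threshold $N_k$ from \cite[Theorem~3.3]{MQS}. You acknowledge the problem but do not resolve it: saying that ``the formula for $N$ is in effect engineered'' to absorb all the $N_k$ is not an argument, and in fact a naive accumulation of single-step thresholds over $r$ stages would not reproduce the clean bound $s!\,\hdeg(R/I) + (s+1)I(R) + 1$. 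In particular nothing in your sketch explains where the factor $s!$ comes from.

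The paper avoids iteration entirely for the Hilbert function. It first proves (Theorem~\ref{result 2}, whose proof \emph{does} proceed one element at a time) that $\ell(H^i_{\mm}(R/I)) = \ell(H^i_{\mm}(R/I_N))$ for $i<s$. Then, using Vasconcelos's reduction-number bound (Lemma~\ref{reduction}), it picks a single minimal reduction $J=(x_1,\ldots,x_s)$ of $\mm$ in $R/I$ with $\mathrm{r}_J(\mm,R/I)\le s!\,\hdeg(R/I)-1$, observes via Lemma~\ref{prevever reduction} that the \emph{same} $J$ is a reduction in $R/I_N$, and passes to the standard parameter ideal $J'=(x_1^t,\ldots,x_s^t)$ with $t=\max\{I(R),1\}$. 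Since $I+J'=I_N+J'$ and the local cohomology lengths match, the explicit Hilbert-function formula for standard parameter ideals (Theorem~\ref{HF of standard}) gives $\ell(R/(J'^{m+1}+I))=\ell(R/(J'^{m+1}+I_N))$ for all $m$; a $d$-sequence computation with $K'=I+J'$ then upgrades this to equality of the $\mm$-adic Hilbert functions. The $s!$ arises from the reduction-number bound, and the $(s+1)I(R)$ from arranging that $J'\subseteq\mm^t$ is standard and that the relevant indices $N_i$ all lie below $N$. This global argument sidesteps the uniformity problem that blocks your inductive approach.
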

	The method of our proof of the above result is inspired by the Srinivas and Trivedi one in the Cohen-Macaulay case. Let us mention the most important step in our proof. If $R$ is Cohen-Macaulay and $J = (x_1, \ldots, x_s)$ a minimal reduction of $\frak m$ with respect to $R/I$, then we can choose $N$ such that $I + J = I_N + J$ for all $I_N \in C_N(I)$. The strategy of Srinivas and Trivedi was to transform the Hilbert functions of $R/I$ and $R/I_N$ (with respect to $\frak m$) to the Hilbert functions of $R/I$ and $R/I_N$ with respect to the parameter ideal $J$, and using the following well-known fact for Cohen-Macaulay rings
	$$\ell (R/(I+J^{n+1})) = \binom{n+s}{s} \ell(R/(I+J)) = \binom{n+s}{s} \ell(R/(I_N+J)) = \ell (R/(I_N+J^{n+1})).$$
	For generalized Cohen-Macaulay rings, we also have an explicit formula for the Hilbert function with respect to special parameter ideals, say standard parameter ideals, in terms of the length of lower local cohomology modules (see Theorem \ref{HF of standard}). Therefore we need to control $\ell (H^i_{\frak m}(R/I))$ under sufficiently small perturbations. This is the second main result of this paper.
	\begin{Theorem}\label{result 2}
		Let $(R,\mm)$ be a generalized Cohen-Macaulay ring of dimension $d$ and $I \subseteq R$ is generated by a part of system of parameters $f_1,\ldots,f_r$. Let $N = e(R/I) + I(R) + 1$, then for all $I_N \in C_N(I)$ we have
		$$\ell(H^i_{\mm}(R/I)) = \ell(H^i_{\mm}(R/I_N))$$
		for every $i < d-r$.
	\end{Theorem}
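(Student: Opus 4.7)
The plan is to proceed by induction on $r$, reducing an $r$-element perturbation to $r$ successive one-element perturbations, and to exploit the elementary annihilation $\mm^{\ell(M)}M=0$ for any finite-length $R$-module. The key input is that $\ell(H^i_\mm(R))\le I(R)$ for every $i<d$ (each summand of $I(R)$ is nonnegative and $\binom{d-1}{i}\ge 1$), so $\mm^{I(R)}H^i_\mm(R)=0$ for all $i<d$.

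\textbf{One-element case.} Let $M$ be a generalized Cohen-Macaulay $R$-module of dimension $e\ge 1$, and $f,\,g=f+\varepsilon$ two parameter elements on $M$. From the short exact sequence $0\to M/(0:_M f)\xrightarrow{f}M\to M/fM\to 0$ together with the finite length of $(0:_M f)$, the long exact sequence in local cohomology yields
\[
0\to H^i_\mm(M)/fH^i_\mm(M)\to H^i_\mm(M/fM)\to (0:_{H^{i+1}_\mm(M)} f)\to 0
\]
for $0\le i<e-1$, and an analogous sequence for $g$. If $\varepsilon\in\mm^{I(M)}$, then $\varepsilon$ annihilates every $H^j_\mm(M)$ with $j<e$, so multiplication by $f$ and by $g$ coincide on these modules; one also checks $(0:_M f)=(0:_M g)$ since both submodules are contained in $H^0_\mm(M)$. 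Hence the two short exact sequences become identical, so $H^i_\mm(M/fM)\cong H^i_\mm(M/gM)$ for every $i<e-1$.

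\textbf{Induction step.} For $r\ge 2$, introduce the intermediate ideals
\[
I^{(k)}=(f_1+\varepsilon_1,\ldots,f_k+\varepsilon_k,f_{k+1},\ldots,f_r),\qquad 0\le k\le r,
\]
and write $I^{(k)}=J_k+(f_{k+1})$, $I^{(k+1)}=J_k+(f_{k+1}+\varepsilon_{k+1})$, where $J_k$ is the common part. Applying the one-element case to $M=R/J_k$ (a gCM $R$-module of dimension $d-r+1$) and the element $f_{k+1}$ reduces the comparison at step $k$ to checking that $\varepsilon_{k+1}\in\mm^{I(R/J_k)}$ and that $f_{k+1}+\varepsilon_{k+1}$ remains a parameter on $R/J_k$. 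A direct computation---using the short exact sequences above, the bound $\ell(H^i_\mm(M/fM))\le\ell(H^i_\mm(M))+\ell(H^{i+1}_\mm(M))$, and Pascal's identity $\binom{e-2}{i}+\binom{e-2}{i-1}=\binom{e-1}{i}$---gives $I(M/fM)\le I(M)$ for every gCM $M$ and parameter $f$. Iterating, $I(R/J_k^{(0)})\le I(R)$ for the unperturbed ideal $J_k^{(0)}=(f_1,\ldots,f_k,f_{k+2},\ldots,f_r)$; by the inductive hypothesis applied to the $(r-1)$-element perturbation $J_k^{(0)}\rightsquigarrow J_k$, the lower local cohomology lengths are preserved, so $I(R/J_k)=I(R/J_k^{(0)})\le I(R)$. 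Hence $\varepsilon_{k+1}\in\mm^{I(R)}\subseteq\mm^N$ meets the annihilator condition at every stage.

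\textbf{Main obstacle.} The hardest part will be verifying, uniformly in $k$, that $f_{k+1}+\varepsilon_{k+1}$ still forms part of a system of parameters on the (possibly already perturbed) intermediate quotient $R/J_k$, and that the explicit threshold $N=e(R/I)+I(R)+1$ dominates the Huneke--Trivedi-type bound required at every step. The multiplicity summand $e(R/I)+1$ is introduced precisely to secure this preservation of the parameter property along the induction, via an Artin--Rees-type control in which $e(R/I)$ absorbs the corresponding thresholds for the intermediate quotients; carefully organising this bookkeeping so that a single bound depending only on $e(R/I)$ and $I(R)$ works across all $r$ steps of the induction is the delicate technical kernel of the proof.
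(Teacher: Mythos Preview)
Your one-element case is correct and coincides with the paper's Proposition~3.1. The genuine gap is precisely what you flag as the ``Main obstacle'': you do not establish that the perturbed generators remain a part of a system of parameters on the intermediate quotients, and you anticipate that this requires some delicate ``Artin--Rees-type control''. In fact the resolution is elementary and sits entirely outside the induction. Extend $f_1,\ldots,f_r$ to a full system of parameters by generic elements $x_1,\ldots,x_{d-r}$; then
\[
\ell\bigl(R/(I+(x_1,\ldots,x_{d-r}))\bigr)\ \le\ \hdeg(R/I)=e(R/I)+I(R/I)\ \le\ e(R/I)+I(R)=N-1,
\]
so $\mm^{N-1}\subseteq I+(x_1,\ldots,x_{d-r})$. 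Consequently, for \emph{any} choice of $\varepsilon_i\in\mm^N$ (some possibly zero), one has $(f_1+\varepsilon_1,\ldots,f_r+\varepsilon_r,x_1,\ldots,x_{d-r})=(f_1,\ldots,f_r,x_1,\ldots,x_{d-r})$, which is $\mm$-primary. This single observation (the paper's Lemma~2.6) handles all of your intermediate parameter checks at once; the summand $e(R/I)$ in $N$ enters only through this extended-degree bound, not through an Artin--Rees argument.

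With this lemma available, your one-at-a-time chain $I=I^{(0)}\rightsquigarrow\cdots\rightsquigarrow I^{(r)}=I_N$ does work, but your detour through the inductive hypothesis on $r-1$ to obtain $I(R/J_k)=I(R/J_k^{(0)})$ is unnecessary: once $J_k$ is known to be generated by a part of a system of parameters of $R$, the inequality $I(R/J_k)\le I(R)$ is immediate from the fact $I(M/fM)\le I(M)$ that you already proved. The paper organises the induction more economically in just two moves: apply the inductive hypothesis inside $R_1=R/(f_1)$ to perturb $f_2,\ldots,f_r$ (valid because $e(R_1/(f_2,\ldots,f_r)R_1)=e(R/I)$ and $I(R_1)\le I(R)$), and then apply the one-element case once inside $R_2=R/(f_2+\varepsilon_2,\ldots,f_r+\varepsilon_r)$ to perturb $f_1$ (valid because $I(R_2)\le I(R)$, which now follows directly from $R_2$ being a parameter quotient of $R$).
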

	The paper is organized as follows: In the next section, we recall some notations used in this paper. We will prove Theorem \ref{result 2} in Section 3. Section 4 is devoted to prove Theorem \ref{result 1}.
	
	\begin{acknowledgement} This project resulted from a trip the first author took to the University of Genoa,
		we would like to thank Matteo Varbaro for making that trip possible. This paper was written while the first author visited the Vietnam Institute for Advanced Study in Mathematics (VIASM), he would like to thank the VIASM for the very kind support and hospitality. The authors are grateful to Professor Maria Evelina Rossi for her useful discussions on the first step of the project.
	\end{acknowledgement}
	
	\section{Preliminaries}
	
	Throughout this paper, $(R, \frak m)$ denotes a Noetherian local ring. Let $M$ be a finitely generated $R$-module of dimension $d \geq 1$, and $J$ an $\mm$-primary ideal of $R$. The {\it Hilbert function} of $M$ with respect to $J$ is defined by
	$$HF_J(M)(n) := \ell(M/J^{n+1}M)$$
	for all $n\geq 0$.
	
	The Hilbert function of $M$, denoted by $HF(M)$, is the Hilbert function of $M$ with respect to the maximal ideal $\mm$. It is well-known that for $n$ sufficiently large the Hilbert function $HF_J(M)$ becomes a polynomial in $n$ of degree $d$, and can be written as the following form
	$$HF_J(M)(n) = e_0(J,M)\binom{n+d}{d} - e_1(J,M)\binom{n+d-1}{d-1} + \cdots + (-1)^de_d(J,M)$$
	for all $n \gg 0$, where $e_i(J,M)$ are the integers and they are called the Hilbert coefficients of $M$ with respect to $J$. In particular, $e(J,M) = e_0(J, M)$ is called the {\it multiplicity} of $M$ with respect to $J$ and $e(M) = e(\mm,M)$ is called the multiplicity of $M$.
	
	If $M$ is Cohen-Macaulay and $J$ is a parameter ideal we have
	$$HF_J(M)(n) = \ell (M/JM) \, \binom{n+d}{d}$$
	for all $n \ge 0$. In particular $e(J,M) = \ell (M/JM)$. In general we always have the inequality $e(J,M) \le \ell (M/JM)$ for all parameter ideals $J$ of $M$.
	\begin{Definition}
		An $R$-module $M$ is called {\it generalized Cohen-Macaulay} if the difference $\ell (M/JM) - e(J, M)$ is bounded above for every parameter ideal $J$.
	\end{Definition}
	We next recall some well-known facts in the theory of generalized Cohen-Macaulay modules (see \cite{T}).
	\begin{Remark} Let $M$ be an $R$-module of dimension $d$. Then
		\begin{enumerate}
			\item $M$ is generalized Cohen-Macaulay if and only if $H^i_{\frak m}(M)$ has finite length for every $i<d$. Moreover, we have
			$$\ell (M/JM) - e(J, M) \le \sum_{i=0}^{d-1}\binom{d-1}{i} \ell(H^i_{\frak m}(M))$$
			for all parameter ideals $J$. The left hand side, denoted by $I(M)$, and is called the {\it Buchsbaum invariant} of $M$.
			\item If $M$ is generalized Cohen-Macaulay, then for every part of system of parameters $x_1, \ldots, x_r$ we have $I(M) \ge I(M/(x_1, \ldots, x_r)M)$.
			\item If $M$ is generalized Cohen-Macaulay, then every system of parameter is a filter regular sequence of $M$. Recalling that $x_1, \ldots, x_t \in \frak m$ is called a {\it filter regular sequence} of $M$ if
			$$\mathrm{Supp} \big(\frac{(x_1, \ldots, x_{i-1})M: x_i}{(x_1, \ldots, x_{i-1})M} \big) \subseteq \{\frak m\}$$
			for all $i = 1, \ldots, t$.
		\end{enumerate}
	\end{Remark}
	\begin{Definition} Let $M$ be a generalized Cohen-Macalay module of dimension $d$. A parameter ideal $J$ of $M$ is called {\it standard} if
		$$\ell (M/JM) - e(J, M) = \sum_{i=0}^{d-1} \binom{d-1}{i} \ell(H^i_{\frak m}(M)).$$
	\end{Definition}
	\begin{Remark} Let $M$ be a generalized Cohen-Macalay module of dimension $d$. Then there exists a positive integer $N$ such that $J$ is standard for every parameter ideal $J \subseteq \frak m^N$. In fact we can choose $N = I(M)$.
	\end{Remark}
	The Hilbert function of a standard parameter ideal $J$ can be expressed explicitly as follows (see \cite[Corollary 4.2]{T}).
	\begin{Theorem} \label{HF of standard} $J$ is a standard parameter ideal of $M$ if and only if
		$$HF_J(M)(n) =  \binom{n+d}{d}e(J,M) + \sum_{i=1}^d \sum_{j=0}^{d-i}\binom{n+d-i}{d-i}\binom{d-i-1}{j-1}\ell(H_{\mm}^j(M)),$$
		for all $n \geq 0$.
	\end{Theorem}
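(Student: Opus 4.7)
My plan is to induct on $d = \dim M$, handling the reverse implication first. Specializing the claimed formula at $n = 0$ gives
$$\ell(M/JM) - e(J, M) = \sum_{i=1}^{d} \sum_{j=0}^{d-i} \binom{d-i-1}{j-1} \ell(H^j_{\mm}(M)),$$
and the hockey-stick identity $\sum_{i=1}^{d-j} \binom{d-i-1}{j-1} = \binom{d-1}{j}$ converts the right-hand side to $I(M)$, so the formula forces $J$ to be standard. Hence I concentrate on the forward direction: assuming $J$ is standard, establish the displayed formula for all $n \geq 0$. For the base case $d = 1$, write $J = (x)$ and $N = H^0_{\mm}(M)$; since $M/N$ is Cohen-Macaulay of dimension one, $\ell(M/(N + x^{n+1}M)) = (n+1) e(J, M)$, and the short exact sequence $0 \to N \to M \to M/N \to 0$ combined with $x$ being a nonzerodivisor on $M/N$ yields $\ell(M/x^{n+1}M) = (n+1) e(J, M) + \ell(N)$, which matches the desired formula.

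For the inductive step, I would pick $x = x_d$ among the parameter generators of $J$ and invoke the classical lemma (from \cite{T}) that when $J$ is standard on $M$: (i) $x$ is filter regular on $M$; (ii) $\bar J := J/(x)$ is a standard parameter ideal of $\bar M := M/xM$; and (iii) $\ell(H^j_{\mm}(\bar M)) = \ell(H^j_{\mm}(M)) + \ell(H^{j+1}_{\mm}(M))$ for $0 \leq j \leq d-2$. Fact (iii) is obtained by splitting the four-term sequence $0 \to 0:_M x \to M \xrightarrow{x} M \to \bar M \to 0$ into two short exact sequences, and using that standardness implies $J \cdot H^j_{\mm}(M) = 0$ for $j < d$, so the connecting maps in the associated long exact sequences of local cohomology decompose cleanly. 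Next, from $0 \to xM \to M \to \bar M \to 0$ together with the standardness-driven identity $J^{n+1} M \cap xM = x J^n M$ for $n \geq 0$, derive a recursion of the shape
$$HF_J(M)(n) - HF_J(M)(n-1) = HF_{\bar J}(\bar M)(n) + (\text{corrections from } 0:_M x),$$
where the corrections involve only $\ell(H^0_{\mm}(M))$ and can be absorbed into the local-cohomology sum. Applying the induction hypothesis to $\bar M$ and substituting (iii) then expresses the difference purely in terms of $e(J,M) = e(\bar J, \bar M)$ and $\ell(H^j_{\mm}(M))$.

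The main obstacle will be the combinatorial bookkeeping: summing the telescoping recursion from $0$ up to $n$ and collecting coefficients must reproduce the target
$$\binom{n+d}{d} e(J,M) + \sum_{i=1}^{d} \sum_{j=0}^{d-i} \binom{n+d-i}{d-i} \binom{d-i-1}{j-1} \ell(H^j_{\mm}(M)).$$
This reduces to repeated Pascal-type manipulations of both binomials $\binom{n+d-i}{d-i}$ (through $\sum_{m=0}^{n} \binom{m+d-1-i}{d-1-i} = \binom{n+d-i}{d-i}$) and $\binom{d-i-1}{j-1}$ (through the index shift $j \mapsto j, j+1$ imposed by (iii)), together with careful handling of the boundary terms at $i = d$ and $j = 0$. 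The hockey-stick identity used in the reverse direction reappears at the end to reconcile the final coefficient of each $\ell(H^j_{\mm}(M))$, and also serves as a consistency check via the $n=0$ specialization.
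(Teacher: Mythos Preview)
The paper does not prove this theorem; it is quoted verbatim from Trung \cite[Corollary 4.2]{T} and used as a black box. So there is no ``paper's own proof'' to compare against.

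Your sketch follows the standard route one finds in the generalized Cohen--Macaulay literature (and essentially in \cite{T}): induct on $d$, pass to $\bar M = M/xM$ using that a standard system of parameters is a $d$-sequence (hence $J^{n+1}M \cap xM = xJ^nM$) and that $J$ annihilates the lower local cohomology (hence the clean additivity $\ell(H^j_{\mm}(\bar M)) = \ell(H^j_{\mm}(M)) + \ell(H^{j+1}_{\mm}(M))$), then do the binomial bookkeeping. The reverse implication via the $n=0$ specialization and the hockey-stick identity is also the expected argument. Two places where your write-up is a bit loose but not wrong: in the base case $d=1$, the cleanest justification is the one-dimensional multiplicity formula $e(x^{n+1},M) = \ell(M/x^{n+1}M) - \ell(0:_M x^{n+1})$ together with $0:_M x^{n+1} = H^0_{\mm}(M)$ (which is exactly what standardness says in dimension one), rather than the short exact sequence you invoke; and in the inductive step the ``corrections from $0:_M x$'' should be made explicit as $\ell(M/(J^nM + H^0_{\mm}(M)))$ via $xM/xJ^nM \cong M/(J^nM + (0:_M x))$ and $0:_M x = H^0_{\mm}(M)$. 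With those points tightened, your outline is the standard proof.
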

	
	In order to capture the complexity of non (generalized) Cohen-Macaulay modules, Vasconcelos et al. \cite{V1, V2} introduced the
	notion of extended degree which is a generalization of the notion of multiplicity. Let $\mathcal{M}(R)$ be the category of finitely generated $R$-modules. An {\it extended degree} on $\mathcal{M}(R)$ is a numerical function $D(\bullet)$ on $\mathcal{M}(R)$ such that the following properties hold for every $R$-module $M \in \mathcal{M}(R)$:
	\begin{enumerate}
		\item $D(M) = D(M/L) + \ell(L)$, where $L = H^0_{\frak m}(M)$,
		
		\item $D(M) \geq D(M/xM)$ for a generic element $x$ of $\mm$,
		
		\item $D(M) = e(M)$ if $M$ is a Cohen-Macaulay module.
	\end{enumerate}
	The prototype of an extended degree is the homological degree defined by Vasconcelos in \cite{V1}. If $R$ is a homomorphic image of a Gorenstein ring $S$ with $\dim S = n$ then the {\it homological degree} of $R$-module $M$ is defined by
	$$\hdeg(M):= e(M) + \sum_{i=0}^{d-1}\binom{d-1}{i}\hdeg(\Ext_S^{n-i}(M,S)).$$
	Recently, Cuong and the first author \cite{CQ} introduced a new extended degree, say the {\it unmixed degree}, and denoted by $\mathrm{udeg}(M)$. The readers are encouraged to \cite{CQ} for more details about the construction. If $M$ is generalized Cohen-Macaulay we have 
	$$\hdeg (M) = \mathrm{udeg} (M) = e(M) + I(M).$$ 
	We close this section with some lemmas that will be useful for the proof of the main results.
	
	\begin{Lemma}\label{presever sop} Let $(R, \frak m)$ be a local ring of dimension $d$ and $I \subseteq R$ is generated by  a part of system of parameters $f_1, \ldots, f_r$. Let $D(\bullet)$ be an any extended degree and set $N = D(R/I) + 1$. Then for every $\varepsilon_1, \ldots, \varepsilon_r \in \frak m^{N}$ we have $f_1 + \varepsilon_1, \ldots, f_r + \varepsilon_r$ is a part of system of parameters of $R$.
	\end{Lemma}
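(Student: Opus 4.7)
The plan is to prove $\dim R/I'\le d-r$ for $I'=(f_1+\varepsilon_1,\ldots,f_r+\varepsilon_r)$; the reverse inequality is automatic from Krull's height theorem since $I'$ has only $r$ generators. My strategy is first to complete $I$ to a full parameter ideal in a controlled way using $D(\bullet)$, and then to show that adjoining the same extra parameters also makes $I'$ an $\mm$-primary ideal, which forces the required dimension bound.

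Iterating property (2) of the extended degree, I would pick generic $x_1,\ldots,x_s\in\mm$ (with $s=d-r$) forming a system of parameters of $R/I$ and satisfying $D(R/I)\ge D(R/(I,x_1))\ge\cdots\ge D(R/(I,x_1,\ldots,x_s))$. Setting $J=(x_1,\ldots,x_s)$, the quotient $R/(I+J)$ has finite length, so $D(R/(I+J))=\ell(R/(I+J))$, and hence $\ell(R/(I+J))\le D(R/I)$. A standard Nakayama/descending-chain argument on $\mm^k\cdot R/(I+J)$ then yields $\mm^{D(R/I)}\subseteq I+J$, and in particular $\mm^{N-1}\subseteq I+J$ since $N=D(R/I)+1$. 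Consequently $\mm^N=\mm\cdot\mm^{N-1}\subseteq\mm\cdot(I+J)$, so each $\varepsilon_i$ admits a presentation
$$\varepsilon_i=\sum_{j=1}^r a_{ij}f_j+b_i,\qquad a_{ij}\in\mm,\ b_i\in J.$$
Reducing modulo $I'+J$ in $\bar R:=R/(I'+J)$, the relations $\bar x_j=0$ and $\overline{f_i+\varepsilon_i}=0$ rewrite as $\bar f_i=-\bar\varepsilon_i=-\sum_j \bar a_{ij}\bar f_j$, i.e.\ the matrix equation $(\delta_{ij}+\bar a_{ij})\cdot(\bar f_1,\ldots,\bar f_r)^{T}=0$ with every $\bar a_{ij}\in\bar\mm$. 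Since $\det(\delta_{ij}+\bar a_{ij})\in 1+\bar\mm$ is a unit of the local ring $\bar R$, this forces $\bar f_i=0$ for all $i$, hence $f_i\in I'+J$. Combined with the immediate reverse inclusion $I'+J\subseteq I+J$ (using $\varepsilon_i\in\mm^N\subseteq I+J$), this yields $I'+J=I+J$, which is $\mm$-primary, whence $\dim R/I'\le s=d-r$.

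The delicate point, which is exactly what the bound $N=D(R/I)+1$ is engineered to handle, is arranging the coefficients $a_{ij}$ to lie in $\mm$ rather than merely in $R$: without this, $(\delta_{ij}+\bar a_{ij})$ need not be invertible and the Nakayama/determinant step fails. Merely $\mm^N\subseteq I+J$ would be too crude; one really needs the inclusion at the previous power, $\mm^{N-1}\subseteq I+J$, so that one factor of $\mm$ can be peeled off each $\varepsilon_i$ before expanding the rest as an $R$-combination of the $f_j$ and elements of $J$. That extra factor of $\mm$ is precisely why $N$ is bumped up by one beyond the length bound $D(R/I)\ge\ell(R/(I+J))$.
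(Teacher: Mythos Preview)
Your proof is correct and follows essentially the same route as the paper: choose generic $x_1,\ldots,x_s$ so that $\ell(R/(I+J))\le D(R/I)=N-1$, deduce $\mm^{N-1}\subseteq I+J$, and conclude $I+J=I'+J$. The paper simply asserts this last equality of ideals; your determinant trick (equivalently, Nakayama applied to $(I+J)/(I'+J)$ via $I+J\subseteq I'+J+\mm(I+J)$) is exactly what justifies it, and your discussion of why the ``$+1$'' in $N$ is needed to place the coefficients $a_{ij}$ in $\mm$ makes explicit a point the paper passes over.
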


	\begin{proof} Let $x_1, \ldots, x_{d-r}$ be a general sequence of elements of $R/I$. Then
		$$\ell (R/(I + (x_1, \ldots, x_{d-r})) \le D(R/I) = N-1.$$
		This implies that $\frak m^{N-1} \subseteq I + (x_1, \ldots, x_{d-r})$. Therefore for every $\varepsilon_1, \ldots, \varepsilon_r \in \frak m^{N}$ we have
		$$(f_1, \ldots, f_r,x_1, \ldots, x_{d-r}) = (f_1 + \varepsilon_1, \ldots, f_r + \varepsilon_r, x_1, \ldots, x_{d-r}).$$
		Hence $f_1 + \varepsilon_1, \ldots, f_r + \varepsilon_r$ is a part of system of parameters of $R$.
	\end{proof}
	It would be nice if we obtain similar results for (filter) regular sequences instead of system of parameters. These results, if have, will play an important role for an answer for Question \ref{question} in the general case. In the main context of this paper $R$ is generalized Cohen-Macaulay, so these three notions coincide. We will need the following regular \cite[Corollary 3.6]{V1}.
	\begin{Lemma}\label{reduction}
		Let $(R, \frak m)$ be a local ring of dimension $d$ with the infinite residue field. Let $\hdeg(\bullet)$ be the homological degree. Then there exists a minimal reduction $J$ of $\frak m$ with reduction number $\mathrm{r}_J(\frak m) \le (d-r)!\, \hdeg(R) - 1$.
	\end{Lemma}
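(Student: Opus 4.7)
The plan is to follow Vasconcelos's strategy from \cite{V1}: construct a minimal reduction $J = (x_1, \ldots, x_d)$ of $\mm$ by choosing the $x_i$ sequentially from generic positions in $\mm$, and control the reduction number through the behavior of $\hdeg$ under successive hyperplane cuts, together with a Lech-type upper bound on the Hilbert function.

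The first step is to use the infinite residue field to choose the generic sequence so that the initial $x_1,\ldots,x_r$ form a (maximal) regular sequence, thereby using up the depth of $R$ without contributing to the reduction number; on the quotient $\bar R = R/(x_1,\ldots,x_r)R$ one has $\hdeg(\bar R) \le \hdeg(R)$ by iterating property~(2) in the definition of an extended degree (and equality of $\hdeg$ with length on the Cohen-Macaulay part, property~(3)), while $\dim \bar R = d-r$. The remaining generators $x_{r+1},\ldots,x_d$ are then chosen generically in $\bar\mm$, again arranging $\hdeg$ to be monotonically controlled along each section, and together they generate a reduction of $\bar\mm$ whose reduction number coincides with $\mathrm{r}_J(\mm)$.

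Having reduced to $\bar R$ of dimension $s:=d-r$ and $\hdeg(\bar R)\le \hdeg(R)$, the next step is to invoke the Lech-type inequality
\[
\ell\bigl(\bar R/\bar\mm^{\,n+1}\bigr) \;\le\; \binom{n+s}{s}\,\hdeg(\bar R),
\]
valid because $\hdeg$ is extended, and transfer it to the $J$-adic filtration after replacing $\bar\mm$ by the minimal reduction $\bar J=(x_{r+1},\ldots,x_d)\bar R$. An induction on $s$ then shows that $\bar\mm^{\,n+1}=\bar J\,\bar\mm^{\,n}$ once $n \ge (d-r)!\,\hdeg(R)-1$: at each hyperplane cut the dimension drops by one and the relevant reduction-number bound is multiplied by the sectional dimension, producing the factorial $(d-r)!$ from the iterated sectional contributions. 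The base case $s=0$ is the observation that in a zero-dimensional local ring the reduction number is at most $\ell(\bar R)-1\le \hdeg(R)-1$.

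The main obstacle lies in the inductive bookkeeping at the generic hyperplane-cut step: at each stage one must simultaneously verify that the zero-dimensional contributions, encoded through $H^0_{\bar\mm}$ of the successive quotients, both bound the increment in the reduction number and preserve the $\hdeg$-monotonicity used above. This is precisely the content of \cite[Cor.~3.6]{V1}; once it is handled, gathering the $s=d-r$ sectional contributions and combining them with the base case produces the asserted bound $\mathrm{r}_J(\mm)\le (d-r)!\,\hdeg(R)-1$.
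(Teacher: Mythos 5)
The paper does not actually prove this lemma: it is quoted directly as \cite[Corollary 3.6]{V1} (and, as printed, the ``$(d-r)!$'' is a slip --- $r$ is never defined in the lemma; in the paper the result is only ever applied to $R/I$, whose dimension is $d-r$, so the intended general form is $\mathrm{r}_J(\mm) \le (\dim R)!\,\hdeg(R)-1$; note that your reading of $r$ as $\depth R$ gives a different, stronger statement). So the only comparison available is between your sketch and Vasconcelos's argument, and there your proposal has a fatal structural problem: it is circular. At the step you yourself single out as ``the main obstacle'' --- the inductive bookkeeping at the generic hyperplane cuts that simultaneously bounds the increment of the reduction number and preserves the $\hdeg$-monotonicity --- you write that ``this is precisely the content of \cite[Cor.~3.6]{V1}.'' But that corollary \emph{is} the statement being proved, so the entire difficulty has been deferred to the conclusion.

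Beyond the circularity, each intermediate step is asserted rather than argued, and each hides real work. First, that a generic maximal regular sequence can be absorbed ``without contributing to the reduction number'' is a statement about superficial sequences that requires positive depth and an argument (Sally's machine); it is not automatic. Second, the Lech-type inequality $\ell(\bar R/\bar\mm^{\,n+1}) \le \binom{n+s}{s}\hdeg(\bar R)$ is not ``valid because $\hdeg$ is extended'': bounding Hilbert functions by an extended degree is itself a nontrivial theorem, not a consequence of the three axioms you cite. Third, even granting that inequality, a uniform upper bound on the Hilbert function does not by itself force $\bar\mm^{\,n+1}=\bar J\,\bar\mm^{\,n}$ for $n\ge d!\,\hdeg(R)-1$; converting Hilbert-function bounds into reduction-number bounds is exactly the content of Vasconcelos's proof, and your sketch supplies neither the mechanism nor the source of the factorial beyond the phrase ``iterated sectional contributions.'' In short, the proposal reproduces the outline of the known argument but every substantive point is either an unproved assertion or an appeal to the result itself; given that the paper treats this as a black-box citation, the honest course is to do the same rather than present a sketch that cannot stand on its own.
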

	\begin{Lemma} \label{prevever reduction} Let $(R, \frak m)$ be a local ring of dimension $d$ and $I \subseteq R$ is generated by  a part of system of parameters $f_1, \ldots, f_r$. Let $J$ be a minimal reduction of $\frak m$ in $R/I$, and $k$ a non-negative integer such that $\mathrm{r}_J(\frak m , R/I) \le k$. Then for all $I_{k+2} \in C_{k+2}(I)$, $J$ is a minimal reduction of $\frak m$ in $R/I_{k+2}$ and $\mathrm{r}_J(\frak m, R/I_{k+2}) \le k+1$.
	\end{Lemma}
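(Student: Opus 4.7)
The plan is to unpack the reduction-number hypothesis into the ideal inclusion $\mm^{k+1} \subseteq J\mm^k + I$ in $R$, and then transport this inclusion into $R/I_{k+2}$ via two applications of Nakayama's lemma. I would write $J = (x_1,\dots,x_{d-r})$ with the $x_i$ lifting a minimal generating set of $J/I$, and split the proof into (i) showing $I+J = I_{k+2}+J$, which ensures $J$ is still a minimal reduction after perturbation, and (ii) showing $\mm^{k+2} \subseteq J\mm^{k+1} + I_{k+2}$, which gives the reduction-number bound.

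For step (i), I would first note that $\mm^{k+2} \subseteq \mm(I+J) \subseteq I+J$, since $\mm^{k+1} \subseteq I+J$; hence every $\varepsilon_i$ lies in $I+J$, so $I_{k+2}+J \subseteq I+J$. For the reverse inclusion I would use $f_i = (f_i+\varepsilon_i) - \varepsilon_i$ with $\varepsilon_i \in \mm^{k+2} \subseteq \mm(I+J)$ to get $I+J \subseteq (I_{k+2}+J) + \mm(I+J)$; Nakayama applied to the finitely generated module $(I+J)/(I_{k+2}+J)$ then yields $I+J = I_{k+2}+J$. This common ideal is $\mm$-primary, forcing $\dim R/I_{k+2} = d-r$, so the images of $x_1,\dots,x_{d-r}$ form a system of parameters of $R/I_{k+2}$. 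Consequently, once step (ii) shows that $J$ is a reduction of $\mm$ modulo $I_{k+2}$, minimality is automatic.

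For step (ii), I would multiply the inclusion $\mm^{k+1} \subseteq J\mm^k + I$ by $\mm$ to get $\mm^{k+2} \subseteq J\mm^{k+1} + \mm I$, and then replace $\mm I$ using $\mm f_i = \mm(f_i+\varepsilon_i) - \mm\varepsilon_i \subseteq \mm I_{k+2} + \mm^{k+3}$, arriving at $\mm^{k+2} \subseteq J\mm^{k+1} + I_{k+2} + \mm^{k+3}$. Setting $A = \mm^{k+2} + J\mm^{k+1} + I_{k+2}$ and $B = J\mm^{k+1} + I_{k+2}$, this reads $A \subseteq B + \mm\cdot\mm^{k+2} \subseteq B + \mm A$, so a second application of Nakayama gives $A = B$, i.e.\ $\mm^{k+2} \subseteq J\mm^{k+1} + I_{k+2}$; passing to $R/I_{k+2}$ this is exactly $\mathrm{r}_J(\mm, R/I_{k+2}) \le k+1$. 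The only real subtlety is the exponent bookkeeping: the perturbation must live in $\mm^{k+2}$ rather than $\mm^{k+1}$ so that $\mm^{k+3} \subseteq \mm\cdot\mm^{k+2}$, which is precisely what activates the Nakayama step, and this accounts for the fact that the reduction number can only be guaranteed to jump up by one.
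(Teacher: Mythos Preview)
Your proof is correct and follows essentially the same route as the paper's: both arguments deduce $I+J=I_{k+2}+J$ from $\mm^{k+1}\subseteq I+J$, then establish $\mm^{k+2}+I_{k+2}=J\mm^{k+1}+I_{k+2}$ by combining the inclusion $\mm^{k+2}\subseteq J\mm^{k+1}+\mm I$ with $\mm I\subseteq \mm I_{k+2}+\mm^{k+3}$ and finishing with Nakayama. Your write-up is a bit more explicit about why minimality of $J$ persists and organizes the Nakayama step more directly, but the underlying mechanism is identical to the paper's.
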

	\begin{proof}
		By the assumption we have $\frak m^{k+1} + I =  J\frak m^k + I$. Therefore $\frak m^{k+1} \subseteq J + I$. Hence for every $I_{k+2} \in C_{k+2}(I)$  we have $J + I = J + I_{k+2}$. We are going to prove that $\frak m^{k+2} + I_{k+2} =  J\frak m^{k+1} + I_{k+2}$. We have
		\begin{align*}
			J\frak m^{k+1} + I_{k+2} + \frak m (\frak m^{k+2} + I_{k+2}) &= J\frak m^{k+1} + I_{k+2} + \frak m (\frak m^{k+2} + I)\\
			&= J\frak m^{k+1} + I_{k+2} + \frak m (J\frak m^{k+1} + I)\\
			&= J\frak m^{k+1} + I_{k+2} + \frak m I\\
			&= \frak m( J\frak m^{k}  + I)+ I_{k+2}\\
			&= \frak m( \frak m^{k+1}  + I)+ I_{k+2}\\
			&= \frak m^{k+2}  + I_{k+2} + \frak m I\\
			&= \frak m^{k+2}  + I_{k+2}.
		\end{align*}
		The last equality follows from the fact that $\frak m I \subseteq \frak m^{k+2}  + I = \frak m^{k+2}  + I_{k+2}$. By NAK we have $\frak m^{k+2} + I_{k+2} =  J\frak m^{k+1} + I_{k+2}$. The proof is complete.
	\end{proof}
	
	\section{Local cohomology under small perturbations}
	Let $(R,\mm)$ be a generalized Cohen-Macaulay ring of dimension $d$ and $I \subseteq R$ is generated by  a part of system of parameters $f_1, \ldots, f_r$. In this section we provide a positive integer $N$ depends on $e(R/I)$ and $I(R)$ such that for all $I_N \in C_N(I)$ we have the lengths of $H^i_{\mm}(R/I)$ and $H^i_{\mm}(R/I_N)$ coincide for every $0 \leq i < d-r$.
	
	The proof of the main result is based on the induction on $r$, where $r$ is the length of the sequence $f_1, \ldots, f_r$. First, for the case $r=1$ we have the following proposition.
	\begin{Proposition} \label{one form} Let $(R,\mm)$ be a generalized Cohen-Macaulay ring of dimension $d$, and $f$ a parameter element of $R$. Then for every $\varepsilon \in \mm^{I(R)}$ such that $f + \varepsilon$ is a parameter element of $R$, we have
		$$\ell(H^i_{\mm}(R/(f))) = \ell(H^i_{\mm}(R/(f+\varepsilon)))$$
		for every $i < d-1$.
	\end{Proposition}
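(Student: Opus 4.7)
My plan is to derive, for each $0 \le i < d-1$, a short exact sequence expressing $H^i_\mm(R/(f))$ purely in terms of the action of $f$ on the finite-length modules $H^i_\mm(R)$ and $H^{i+1}_\mm(R)$, and then to show that any $\varepsilon \in \mm^{I(R)}$ acts as zero on all these modules, so that $f$ and $f+\varepsilon$ contribute identically.

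First I would split the four-term exact sequence $0 \to (0:_R f) \to R \xrightarrow{f} R \to R/(f) \to 0$ into the two short exact sequences $0 \to (0:_R f) \to R \to R/(0:_R f) \to 0$ and $0 \to R/(0:_R f) \xrightarrow{f} R \to R/(f) \to 0$. Since $R$ is generalized Cohen-Macaulay and $f$ is a parameter, $f$ is filter regular, so $(0:_R f)$ has finite length and is contained in $H^0_\mm(R)$. The first sequence then yields $H^i_\mm(R)\cong H^i_\mm(R/(0:_R f))$ for all $i \ge 1$ and $H^0_\mm(R/(0:_R f))\cong H^0_\mm(R)/(0:_R f)$. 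Plugging these identifications into the long exact local cohomology sequence of the second short exact sequence, and chasing the connecting maps (each of which is induced by $[x]\mapsto fx$), I expect to obtain, for every $0\le i<d-1$,
\begin{equation*}
0 \to H^i_\mm(R)/fH^i_\mm(R) \to H^i_\mm(R/(f)) \to (0:_{H^{i+1}_\mm(R)} f) \to 0,
\end{equation*}
with both outer modules of finite length. Because $f+\varepsilon$ is a parameter element by hypothesis, the identical construction delivers the analogous short exact sequence for $R/(f+\varepsilon)$.

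The decisive input is that $\mm^{I(R)}$ annihilates every $H^i_\mm(R)$ with $i<d$: indeed $I(R)=\sum_{i=0}^{d-1}\binom{d-1}{i}\ell(H^i_\mm(R))\ge \ell(H^i_\mm(R))$ for each such $i$, and any module of length $n$ supported only at $\mm$ is killed by $\mm^n$. Therefore multiplication by $\varepsilon$ is the zero map on $H^i_\mm(R)$ for every $i<d$, so the actions of $f$ and $f+\varepsilon$ coincide on $H^i_\mm(R)$ and on $H^{i+1}_\mm(R)$ whenever $i+1<d$. This forces the outer terms of the two displayed sequences to have equal lengths, and additivity of length gives $\ell(H^i_\mm(R/(f))) = \ell(H^i_\mm(R/(f+\varepsilon)))$ for all $i<d-1$, as required.

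The one place the argument is not uniform in $i$ is the boundary case $i=0$, and I expect the mild obstacle there to be verifying that the connecting map $H^0_\mm(R/(0:_R f))\to H^0_\mm(R)$ is injective with image $fH^0_\mm(R)$. This requires the identification $H^0_\mm(R/(0:_R f))\cong H^0_\mm(R)/(0:_R f)$ together with the formula $[x]\mapsto fx$, whose kernel is trivial by the very definition of $(0:_R f)$; it is a routine check rather than a real difficulty, and no induction or separate treatment of small $d$ is needed (for $d\le 1$ the statement is vacuous).
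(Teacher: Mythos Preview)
Your argument is correct and follows essentially the same route as the paper: derive the short exact sequence $0 \to H^i_\mm(R)/fH^i_\mm(R) \to H^i_\mm(R/(f)) \to (0:_{H^{i+1}_\mm(R)} f) \to 0$ from $0 \to R/(0:f) \xrightarrow{f} R \to R/(f) \to 0$, do the same for $f+\varepsilon$, and use that $\varepsilon \in \mm^{I(R)}$ annihilates $H^i_\mm(R)$ for $i<d$. Your additional justifications (the explicit splitting of the four-term sequence, the inequality $I(R)\ge \ell(H^i_\mm(R))$ combined with the fact that $\mm^n$ kills any length-$n$ module, and the $i=0$ check) are all sound and simply flesh out steps the paper leaves implicit.
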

	\begin{proof} Following from the short exact sequence
		$$0 \longrightarrow R/(0:f) \overset{f}{\longrightarrow} R \longrightarrow R/(f) \longrightarrow 0$$
		we obtain the following short exact sequence
		$$0 \longrightarrow \frac{H^i_{\mm}(R)}{fH^i_{\mm}(R)} \longrightarrow H^i_{\mm}(R/(f)) \longrightarrow (0:_{H^{i+1}_{\mm}(R)} f)\longrightarrow  0$$
		for every $i < d-1$. Similarly we have the following short exact sequence
		$$0 \longrightarrow \frac{H^i_{\mm}(R)}{(f+\varepsilon)H^i_{\mm}(R)} \longrightarrow H^i_{\mm}(R/(f+\varepsilon)) \longrightarrow (0:_{H^{i+1}_{\mm}(R)} (f+\varepsilon))\longrightarrow  0$$
		for every $i < d-1$. Since $\varepsilon \in \mm^{I(R)}$ we have $\varepsilon\,  H^i_{\mm}(R) = 0$ for all $i <d$. It follows that $(0:_{H^{i+1}_{\mm}(R)} f )\cong (0:_{H^{i+1}_{\mm}(R)} (f+\varepsilon))$ and $\frac{H^i_{\mm}(R)}{fH^i_{\mm}(R)} \cong \frac{H^i_{\mm}(R)}{(f+\varepsilon)H^i_{\mm}(R)}$ for all $i < d-1$. Hence the above two short exact sequences imply
		$$\ell(H^i_{\mm}(R/(f))) = \ell(H^i_{\mm}(R/(f+\varepsilon)))$$
		for all $i < d-1$.
	\end{proof}
	We now present the main result of this section.
	\begin{Theorem} \label{lc}
		Let $(R,\mm)$ be a generalized Cohen-Macaulay ring of dimension $d$ and $I \subseteq R$ is generated by  a part of system of parameters $f_1, \ldots, f_r$. Let $N = e(R/I) + I(R) + 1$, then for all $I_N \in C_N(I)$ we have
		$$\ell(H^i_{\mm}(R/I)) = \ell(H^i_{\mm}(R/I_N))$$
		for every $i < d-r$.
	\end{Theorem}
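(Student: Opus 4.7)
The plan is to induct on $r$. For the base case $r=1$, I would invoke Proposition \ref{one form} directly, after verifying two things: $N\ge I(R)$ is immediate, giving $\varepsilon\in\mm^{I(R)}$, and Lemma \ref{presever sop} applied with $D=\hdeg$ implies $f+\varepsilon$ is a parameter, since $\hdeg(R/(f))=e(R/(f))+I(R/(f))\le e(R/(f))+I(R)$, using the inequality $I(M/(x_1,\ldots,x_k)M)\le I(M)$ for a part of a system of parameters, so that $N\ge\hdeg(R/(f))+1$.

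For the inductive step ($r\ge 2$), the idea is to split the perturbation in two and chain through the intermediate ideal $(I'_N,f_r)$, where $I'_N=(f_1+\varepsilon_1,\ldots,f_{r-1}+\varepsilon_{r-1})$. Stage I compares $R/I$ with $R/(I'_N,f_r)$ by passing to the generalized Cohen-Macaulay ring $S:=R/(f_r)$ of dimension $d-1$: both are quotients of $S$ by the length-$(r-1)$ part of system of parameters $f_1,\ldots,f_{r-1}$ and its perturbation, and the induction-hypothesis threshold inside $S$ is $e(S/(f_1,\ldots,f_{r-1})S)+I(S)+1=e(R/I)+I(R/(f_r))+1\le N$ once again by $I(R/(f_r))\le I(R)$. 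Invoking the induction hypothesis therefore yields $\ell(H^i_\mm(R/I))=\ell(H^i_\mm(R/(I'_N,f_r)))$ for every $i<(d-1)-(r-1)=d-r$.

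Stage II compares $R/(I'_N,f_r)$ with $R/I_N$ by passing instead to the generalized Cohen-Macaulay ring $T:=R/I'_N$ of dimension $d-r+1$ (Lemma \ref{presever sop} keeps $I'_N$ a part of a system of parameters of $R$). The elements $f_r$ and $f_r+\varepsilon_r$ are both parameters of $T$ by further applications of Lemma \ref{presever sop} to the sop $f_1,\ldots,f_r$, and $\varepsilon_r\in\mm^N\subseteq\mm^{I(T)}$ since $I(T)\le I(R)$; Proposition \ref{one form} then supplies $\ell(H^i_\mm(R/(I'_N,f_r)))=\ell(H^i_\mm(R/I_N))$ for $i<d-r$. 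Chaining the two stages closes the induction.

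The main obstacle is the bookkeeping: verifying that the single threshold $N=e(R/I)+I(R)+1$ simultaneously dominates (i) the Buchsbaum invariants $I(\cdot)$ of every intermediate quotient encountered, (ii) the extended-degree thresholds $\hdeg(\cdot)+1$ demanded by Lemma \ref{presever sop} at each invocation, and (iii) the induction-hypothesis threshold $e(\cdot)+I(\cdot)+1$ applied inside $S=R/(f_r)$. All of these checks should collapse to the inequality $I(M/(x_1,\ldots,x_k)M)\le I(M)$ for parts of systems of parameters in a generalized Cohen-Macaulay module $M$, together with the identity $e(S/(f_1,\ldots,f_{r-1})S)=e(R/I)$ used in Stage I.
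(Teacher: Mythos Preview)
Your proposal is correct and follows essentially the same approach as the paper: induction on $r$ with base case Proposition~\ref{one form}, and in the inductive step chaining through an intermediate ideal by first applying the induction hypothesis inside a one-parameter quotient of $R$ and then applying Proposition~\ref{one form} inside the quotient by the already-perturbed $(r-1)$ parameters. The only cosmetic difference is the ordering: the paper peels off $f_1$ first (working in $R/(f_1)$ for the induction and then in $R/(f_2+\varepsilon_2,\ldots,f_r+\varepsilon_r)$ for Proposition~\ref{one form}), whereas you peel off $f_r$ last, but the bookkeeping with $I(\cdot)\le I(R)$ and $\hdeg(R/I)+1\le N$ via Lemma~\ref{presever sop} is identical.
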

	\begin{proof} Without loss of generality we will always assume that the residue field is infinite. We proceed by induction on $r$. For $r=1$, we have 
		$$N \geq e(R/(f_1)) + I(R/(f_1)) + 1 = \hdeg(R/(f_1)) + 1.$$ 
		So, by Lemma \ref{presever sop}, $f_1+\varepsilon_1$ is a parameter element for every $\varepsilon_1 \in \mm^N$. Hence we are done by Proposition \ref{one form}.
		
		For $r > 1$ and $I_N = (f_1+\varepsilon_1,\ldots,f_r+\varepsilon_r)$, where $\varepsilon_1,\ldots,\varepsilon_r \in \mm^N$. Let $R_1 = R/(f_1)$. For simplicity, we will identify $f_i$ with its image in $R_1$. Since $N \geq e(R_1/({f_2},\ldots,{f_r})R_1) + I(R_1) + 1$ and $\varepsilon_2,\ldots,\varepsilon_r \in \mm^N$, by induction we get
		$$\ell(H^i_{\mm}(R/(f_1,f_2,\ldots,f_r))) = \ell(H^i_{\mm}(R/(f_1,f_2+\varepsilon_2,\ldots,f_r+\varepsilon_r)))$$
		for every $i < d-r$.  Since $N \geq \hdeg(R/I)+1$ and $\varepsilon_1,\ldots,\varepsilon_r \in \mm^N$, by Lemma \ref{presever sop} we have $f_1,f_2+\varepsilon_2,\ldots,f_r+\varepsilon_r$ and $f_1+\varepsilon_1,f_2+\varepsilon_2,\ldots,f_r+\varepsilon_r$ are the parts of system of parameters of $R$. Let $R_2 = R/(f_2+\varepsilon_2,\ldots,f_r+\varepsilon_r)$, we have $f_1$ and $f_1+\varepsilon_1$ are parameter elements of $R_2$. Moreover, $N \geq I(R_2)$ and $\varepsilon_1 \in \mm^N$, by Proposition \ref{one form} we get
		$$\ell(H^i_{\mm}(R_2/f_1R_2)) = \ell(H^i_{\mm}(R_2/(f_1+\varepsilon_1)R_2))$$
		for every $i < d-r$. That is
		$$\ell(H^i_{\mm}(R/(f_1,f_2+\varepsilon_2,\ldots,f_r+\varepsilon_r))) = \ell(H^i_{\mm}(R/(f_1+\varepsilon_1,f_2+\varepsilon_2,\ldots,f_r+\varepsilon_r)))$$
		for every $i < d-r$. Hence we obtain the desired assertion. The proof is complete.
	\end{proof}
	It is natural to ask the following question in general case.
	\begin{Question} Let $(R, \frak m)$ be a local ring and $I \subseteq R$ is generated by a filter regular sequence  $f_1, \ldots, f_t$ . Does there exist a positive integer $N$ such that for all $I_N \in C_N(I)$ we have 
		$$\ell (H^0_{\frak m}(R/I)) = \ell (H^0_{\frak m}(R/I_N))?$$
	\end{Question}

	\section{Hilbert funcion under small perturbations}
	In this section, let $(R,\mm)$ be a generalized Cohen-Macaulay ring of dimension $d$, and $I \subseteq R$ is generated  by a part of system of parameters $f_1,\ldots,f_r$. We will find an explicitly positive integer $N$ depends on $\hdeg(R/I)$ and $I(R)$ such that for all $I_N \in C_N(I)$ the Hilbert functions of $R/I$ and $R/I_N$ coincide. The following lemma is a special case of Lemma \ref{reduction} and Lemma \ref{prevever reduction}.
	\begin{Lemma}\label{reduction gCM}
		Let $(R,\mm)$ be a generalized Cohen-Macaulay ring of dimension $d$ with the infinite residue field, and $I \subseteq R$ is generated  by a part of system of parameters $f_1,\ldots,f_r$. Let $s = d-r$, and $k = s!\, \hdeg(R/I)+1$. Then there exists a minimal reduction $J$ of $\mm$ in $R/I$ such that
		\begin{enumerate}
			\item $\mm^{k+m} + I = J^{m+1}\mm^{k-1}+ I$ for all $m \geq 0$.
			\item For every $I_k \in C_k(I)$ one has $\mm^{k+m} + I_k = J^{m+1}\mm^{k-1}+ I_k$ for all $m \geq 0$.
		\end{enumerate}
	\end{Lemma}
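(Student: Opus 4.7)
The plan is to apply Lemma \ref{reduction} to the generalized Cohen-Macaulay ring $R/I$, which has dimension $s = d-r$ and infinite residue field. This produces a minimal reduction $J$ of $\mm$ in $R/I$ with reduction number bounded by $s!\, \hdeg(R/I) - 1 = k - 2$. In particular, the defining equality $\mm^k + I = J\mm^{k-1} + I$ holds.

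For part (1), a routine induction on $m$ using $\mm I \subseteq I$ and $J \subseteq \mm$ propagates this base case to all $m \ge 0$. Multiplying the inductive hypothesis $\mm^{k+m-1} + I = J^m \mm^{k-1} + I$ by $\mm$ and absorbing $\mm I$ into $I$ yields $\mm^{k+m} + I = J^m \mm^k + I$; substituting the base equality into $\mm^k$ then upgrades the right-hand side to $J^{m+1}\mm^{k-1} + I$, proving (1).

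For part (2), I would invoke Lemma \ref{prevever reduction} with its internal integer set equal to $k-2$; this is precisely the reduction number bound just obtained. The conclusion is that for every $I_k = I_{(k-2)+2} \in C_k(I)$, $J$ remains a minimal reduction of $\mm$ in $R/I_k$ with reduction number at most $k-1$, that is, $\mm^k + I_k = J\mm^{k-1} + I_k$. The same induction on $m$ carried out in part (1), now with $I$ replaced by $I_k$, then delivers (2).

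The only real delicacy is index bookkeeping. Lemma \ref{prevever reduction} costs a shift of $+2$ in the perturbation level and $+1$ in the reduction number, so in order to land the perturbed statement at exactly the same threshold $k$ that governs the unperturbed statement, one must enter the lemma with a reduction number bound of $k-2$ rather than $k-1$. The choice $k = s!\, \hdeg(R/I)+1$ is tuned precisely to leave this two-step cushion after applying Lemma \ref{reduction}; a less generous choice would force the conclusion of (2) to be stated only for $I_{k+1}$ or $I_{k+2}$ rather than for $I_k$.
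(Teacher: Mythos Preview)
Your proof is correct and follows essentially the same route as the paper: apply Lemma~\ref{reduction} to $R/I$ to get $\mathrm{r}_J(\mm,R/I)\le k-2$, invoke Lemma~\ref{prevever reduction} with its internal parameter set to $k-2$ to obtain $\mathrm{r}_J(\mm,R/I_k)\le k-1$, and then iterate the reduction equality to reach all $m\ge 0$. The only difference is cosmetic: the paper states the iteration in one line (working in $R/I$ and $R/I_k$), whereas you spell out the induction and the index bookkeeping explicitly.
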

	\begin{proof}
		By Lemma \ref{reduction} there exists a minimal reduction $J=(x_1,\ldots,x_{d-r})$ of $\frak m$ in $R/I$ such that $\mathrm{r}_J(\frak m , R/I) \le (d-r)!\, \hdeg( R/I) - 1 = k - 2$. Hence $\mm^k + I = J\mm^{k-1}+I$. By Lemma \ref{prevever reduction} one has $J$ is a minimal reduction of $\mm$ in $R/I_k$ and $\mathrm{r}_J(\frak m,R/I_k) \le k-1$.  Hence $\mm^k + I_k = J\mm^{k-1} + I_k$. Therefore, for all $m \ge 0$ we have $\frak m^{k+m} R/I = J^{m+1} \frak m^{k-1} R/I$ and $\frak m^{k+m} R/I_k = J^{m+1} \frak m^{k-1} R/I_k$. The claims are now clear.
	\end{proof}
	
	The following theorem is the main result of this section. It extends the result of Srinivas and Trivedi \cite[Proposition 1]{ST2} for generalized Cohen-Macaulay rings.
	\begin{Theorem}
		Let $(R,\mm)$ be a generalized Cohen-Macaulay ring of dimension $d$, and $I \subseteq R$ is generated  by a part of system of parameters $f_1,\ldots,f_r$. Let $s = d-r $, and $$N = s!\, \hdeg(R/I) + (s+1)I(R) + 1.$$ Then for all $I_N \in C_N(I)$ we have
		$$HF(R/I) = HF(R/I_N).$$
	\end{Theorem}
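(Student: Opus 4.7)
The plan is to extend the Srinivas--Trivedi approach from the Cohen--Macaulay case to the generalized Cohen--Macaulay setting, replacing the Cohen--Macaulay Hilbert--Samuel formula by Theorem~\ref{HF of standard}. Without loss of generality I assume the residue field is infinite. Set $A=R/I$, $A_N=R/I_N$, and $k=s!\,\hdeg(R/I)+1$, so that $N=k+(s+1)I(R)$. Because $N\ge e(R/I)+I(R)+1$, Theorem~\ref{lc} applies and gives $\ell(H^j_{\mm}(A))=\ell(H^j_{\mm}(A_N))$ for every $j<s$; in particular $A$ and $A_N$ are generalized Cohen--Macaulay of dimension $s$ with common Buchsbaum invariant. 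For $n\le N-1$ the identity $\mm^{n+1}+I=\mm^{n+1}+I_N$ yields $HF(A)(n)=HF(A_N)(n)$ at once, so only $n\ge N$ requires work.

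For $n\ge N$, Lemma~\ref{reduction gCM} produces a minimal reduction $J=(x_1,\dots,x_s)$ of $\mm$ in $A$ satisfying $\mm^{k+m}+I=J^{m+1}\mm^{k-1}+I$ and $\mm^{k+m}+I_N=J^{m+1}\mm^{k-1}+I_N$ for every $m\ge 0$; in particular $I+J=I_N+J$ and hence $\ell(A/JA)=\ell(A_N/JA_N)$. Writing $m=n-k+2$, the relation $\mm^{n+1}A=J^m\mm^{k-1}A$ combined with the short exact sequence
\[
0\to\mm^{k-1}A/J^m\mm^{k-1}A\to A/J^m\mm^{k-1}A\to A/\mm^{k-1}A\to 0
\]
and its analogue for $A_N$, together with $\ell(A/\mm^{k-1}A)=\ell(A_N/\mm^{k-1}A_N)$, reduces the problem to proving
\[
\ell(\mm^{k-1}A/J^m\mm^{k-1}A)=\ell(\mm^{k-1}A_N/J^m\mm^{k-1}A_N)
\]
for $m\ge(s+1)I(R)+2$. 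Setting $N_0=\mm^{k-1}A$ and $N_0'=\mm^{k-1}A_N$, the long exact sequences of local cohomology attached to $0\to N_0\to A\to A/\mm^{k-1}A\to 0$ (and its analogue for $A_N$), together with the finite-length quotient $A/\mm^{k-1}A=A_N/\mm^{k-1}A_N$, show that $N_0$ and $N_0'$ are generalized Cohen--Macaulay of dimension $s$ with $\ell(H^j_{\mm}(N_0))=\ell(H^j_{\mm}(N_0'))$ for every $j<s$. The key structural input is the identity $JN_0=\mm N_0$ coming from the reduction relation, which yields $J^m N_0=\mm^{k+m-1}A$ for every $m\ge 0$.

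The next step is to use the slack $(s+1)I(R)$ in $N$ to show that $J$ is a standard parameter ideal of $N_0$ and of $N_0'$. Once standardness is in hand, Theorem~\ref{HF of standard} yields an explicit polynomial expression for $\ell(N_0/J^m N_0)$ depending only on $e(J,N_0)$ and on the lengths $\ell(H^j_{\mm}(N_0))$ for $j<s$; additivity of multiplicity on the finite-length quotient $A/N_0$ gives $e(J,N_0)=e(J,A)$, and the equality $e(J,A)=e(J,A_N)$ follows from $\ell(N_0/JN_0)=\ell(N_0'/JN_0')$ (since $\mm^{k-1}+I=\mm^{k-1}+I_N$ and $\mm^k+I=\mm^k+I_N$) together with $I(N_0)=I(N_0')$. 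Applying the same formula to $N_0'$ then produces the required length equality, finishing the proof. The principal obstacle is the verification that $J$ is a standard parameter ideal of $N_0$ and $N_0'$: the naive sufficient criterion $J\subseteq\mm^{I(N_0)}$ is unavailable because $J$ is a minimal reduction of $\mm$ and is therefore not contained in any deep power of $\mm$. Instead, the standardness must be extracted from the structural identity $JN_0=\mm N_0$ by absorbing the homological corrections to $\ell(N_0/J^m N_0)$; the extra $(s+1)I(R)$ beyond the reduction threshold $k$ is precisely the slack required for this absorption.
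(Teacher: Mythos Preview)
Your outline follows the paper's strategy up to the point where you introduce the module $N_0=\mm^{k-1}A$, but the crucial step---showing that $J$ is a \emph{standard} parameter ideal of $N_0$ and $N_0'$---is not proved. You correctly flag this as ``the principal obstacle'' and then assert that ``the standardness must be extracted from the structural identity $JN_0=\mm N_0$ by absorbing the homological corrections; the extra $(s+1)I(R)$ \ldots\ is precisely the slack required.'' This is a description of what you hope happens, not an argument. There is no mechanism offered by which the relation $JN_0=\mm N_0$ forces $\ell(N_0/JN_0)-e(J,N_0)=I(N_0)$, and in fact a minimal reduction of $\mm$ has no reason to be standard for $N_0$: standardness is governed by how the parameters interact with the local cohomology, not by the reduction relation. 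Your subsidiary claim that $\ell(H^j_{\mm}(N_0))=\ell(H^j_{\mm}(N_0'))$ for all $j<s$ is also not justified for $j=0,1$; the long exact sequence only gives an alternating-sum identity, and the individual lengths need not match without further input.

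The paper avoids this obstacle entirely by never asking $J$ itself to be standard. Instead it sets $t=\max\{I(R),1\}$ and passes to $J'=(x_1^{t},\ldots,x_s^{t})\subseteq\mm^{t}$, which \emph{is} a standard parameter ideal of both $R/I$ and $R/I_N$ by the $\mm^{I(R)}$ criterion. Theorem~\ref{HF of standard} then gives $\ell(R/(J'^{m+1}+I))=\ell(R/(J'^{m+1}+I_N))$ directly. The remaining work is to relate powers of $\mm$ to powers of $J'$: one shows $\mm^{N_i+mt}+I=J'^{m+1}\mm^{N_i-t}+I$ for suitable shifts $N_i$ (this is where the extra $(s+1)I(R)$ in $N$ is actually spent, since $J'J^{(s-1)(t-1)}=J^{s(t-1)+1}$), and then uses the $d$-sequence property of $x_1^{t},\ldots,x_s^{t}$ on $R/I$ and $R/I_N$ to control the intersections $K'^{m+1}\cap I$ and $K'^{m+1}\cap I_N$ with $K'=I+J'$. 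The passage $J\rightsquigarrow J'$ is the missing idea in your proposal.
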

	\begin{proof} Without loss of generality we may assume that the residue field is infinite. 
		Let $k = s!\, \hdeg(R/I) + 1$, by Lemma \ref{reduction gCM}  there exists ideal $J = (x_1,\ldots,x_s) \subseteq \mm$ such that
		$$\mm^{k+m} + I = J^{m+1}\mm^{k-1}+I$$
		for all $m \geq 0$. Moreover, since $I_N \in C_N(I) \subseteq C_k(I)$ we also have
		$$\mm^{k+m} + I_N = J^{m+1}\mm^{k-1}+I_N$$
		for all $m \geq 0$. Let $t= \max\{I(R),1\}$. For all $0 \le i \le t-1$ set $N_i= k + s(t-1) + i$. We have $N_i \le N$ for all $i \le t-1$. Since $I_N \in C_N(I) \subseteq C_{N_0}(I)$ we have
		$$\mm^i + I = \mm^i + I_N$$
		for all  $ i \leq N_0$. Therefore, it is enough to prove that
		$$\ell \left(\frac{R}{\mm^n+I}\right) = \ell \left(\frac{R}{\mm^n+I_N}\right) \quad \quad (1)$$
		for all $n \geq N_0.$ We will prove it in the following equivalent form
		$$\ell \left(\frac{R}{\mm^{N_i + mt}+I}\right) = \ell \left(\frac{R}{\mm^{N_i + mt}+I_N}\right) \quad \quad (2)$$
		for all $0 \le i \le t-1$ and all $m \ge 0$.
		Set $J' = (x_1^t,\ldots,x_s^t)$, one has $J' \subseteq \frak m^t$ is a standard parameter ideal of $R/I$ and $J' J^{(s-1)(t-1)} = J^{s(t-1)+1}$.
		\vskip 0.2cm 
		\noindent {\bf Claim 1.} For all $0 \le i \le t-1$ and all $m \ge 0$ we have
		$$\mm^{N_i+mt} + I = J'^{m+1}\mm^{N_i-t}+I, $$
		and
		$$\mm^{N_i+mt} + I_N = J'^{m+1}\mm^{N_i-t} + I_N$$
		for all $I_N \in C_N(I)$
		\begin{proof}[Proof of Claim 1] We have
			\begin{eqnarray*}
				\frak m^{N_i + mt}R/I &=& J^{s(t-1)+mt+i+1}\frak m^{k-1}R/I\\
				&=& J'^{m+1} J^{(s-1)(t-1) + i }\frak m^{k-1}R/I\\
				& =& J'^{m+1} \frak m^{N_i-t}R/I
			\end{eqnarray*}
			for all $m \ge 0$. Therefore 
			$$\mm^{N_i+mt} + I = J'^{m+1}\mm^{N_i-t}+I$$
			for all $0 \le i \le t-1$ and all $m \ge 0$. The second assertion can be proved similarly. The Claim is proved.
		\end{proof}
		By Claim 1, in order to prove the equality (2) it is enough to show
		$$\ell \left(\frac{R}{J'^{m+1}\mm^{N_i-t}+I}\right) = \ell \left(\frac{R}{J'^{m+1}\mm^{N_i-t}+I_N}\right) \quad \quad (3)$$
		for all $0 \le i \le t-1$ and all $m \ge 0$. On the other hand, since $N \ge e(R/I) + I(R) + 1$ we have
		$$\ell(H^i_{\mm}(R/I)) = \ell(H^i_{\mm}(R/I_N))$$
		for all $i<s$ by Theorem \ref{lc}. We also have $I + J' = I_N + J'$ since $I + J' \supseteq \mm^{N-1}$. Hence $J'$ is a standard parameter ideal of $R/I_N$ and $e(J',R/I_N)= e(J',R/I)$. By Theorem \ref{HF of standard} we have
		$$\ell \left(\frac{R}{J'^{m+1}+I}\right) = \ell \left(\frac{R}{J'^{m+1} +I_N}\right) \quad \quad (4)$$
		for all $m \ge 0$. Therefore in order to prove the equality (3) it is sufficient to prove that
		$$\ell \left(\frac{J'^{m+1}+I}{J'^{m+1}\mm^{N_i-t}+I}\right) = \ell \left(\frac{J'^{m+1} +I_N}{J'^{m+1}\mm^{N_i-t}+I_N}\right) \quad \quad (5)$$
		for all $0 \le i \le t-1$ and all $m \ge 0$. Let $K' = I + J' = I_N + J'$. We will prove (5) in the following equivalent form
		$$\ell \left(\frac{K'^{m+1}+I}{K'^{m+1}\mm^{N_i-t}+I}\right) = \ell \left(\frac{K'^{m+1} +I_N}{K'^{m+1}\mm^{N_i-t}+I_N}\right) \quad \quad (6)$$
		for all $0 \le i \le t-1$ and all $m \ge 0$.
		\vskip 0.2cm
		\noindent {\bf Claim 2.} For all $m \ge 0$ we have $K'^{m+1} \cap I = IK'^m$ and $K'^{m+1} \cap I_N = I_NK'^m$.
		\begin{proof}[Proof of Claim 2]
			Notice that $x_1^t,\ldots,x_s^t$ forms a d-sequence of $R/I$, by \cite[Theorem 2.1]{H} we have
			$$J'^{m+1} \cap I \subseteq J'^m I.$$
			Hence, for all $m\geq0$ we have
			\begin{align*}
				K'^{m+1} \cap I &= (I+J')^{m+1} \cap I\\
				&= (IK'^m + J'^{m+1}) \cap I\\
				&= IK'^m + J'^{m+1} \cap I\\
				&= IK'^m.
			\end{align*}
			The second assertion can be proved similarly.
		\end{proof}
		We continue the proof of our theorem. Follows from Claim 2 we have
		\begin{align*}
			K'^{m+1}\mm^{N_i-t}+K'^{m+1} \cap I &= K'^{m+1}\mm^{N_i-t} + K'^mI\\
			&= K'^m(K'\mm^{N_i-t} + I)\\
			&= K'^m(J'\mm^{N_i-t}+ I)\\
			&= K'^m(\mm^{N_i}+I)\\
			&= K'^m(\mm^{N_i}+I_N)\\
			&= K'^m(J'\mm^{N_i-t} + I_N)\\
			&= K'^m(K'\mm^{N_i-t} + I_N)\\
			&= K'^{m+1}\mm^{N_i-t} + K'^{m+1} \cap I_N
		\end{align*}
		for all $0 \le i \le t-1$ and all $m \ge 0$. Therefore
		\begin{eqnarray*}
			\frac{K'^{m+1}+I}{K'^{m+1}\mm^{N_i-t}+I} & \cong& \frac{K'^{m+1}}{K'^{m+1}\mm^{N_i-t}+K'^{m+1}\cap I}\\
			&\cong & \frac{K'^{m+1}}{K'^{m+1}\mm^{N_i-t}+K'^{m+1}\cap I_N}\\
			&\cong & \frac{K'^{m+1} +I_N}{K'^{m+1}\mm^{N_i-t}+I_N}
		\end{eqnarray*}
		for all $0 \le i \le t-1$ and all $m \ge 0$. The equality (6) is now clear. The proof is complete.
	\end{proof}
	We close the paper with the following.
	\begin{Remark} If $R$ is Cohen-Macaulay, our formula $N = s!\, e(R/I) + 1$ slightly improves the formula of Srinivas and Trivedi. If $R$ is generalized Cohen-Macaulay but not Cohen-Macaulay, according to the proof we can choose 
		$$N = N_{t-1} = s!\, \hdeg(R/I) + (s+1)I(R)-s$$
	\end{Remark}

\end{document}